\documentclass[11pt,a4paper]{amsart}

\usepackage[margin=1in]{geometry}
\usepackage{amsmath,amssymb,mathtools}
\usepackage{microtype}
\usepackage[colorlinks=true,linkcolor=blue,citecolor=blue,urlcolor=blue]{hyperref}

\newtheorem{theorem}{Theorem}[section]
\newtheorem{proposition}[theorem]{Proposition}
\newtheorem{lemma}[theorem]{Lemma}
\theoremstyle{definition}

\theoremstyle{remark}

\newcommand{\N}{\mathbb N}

\newcommand{\prodset}[1]{\mathop{\prod}\limits_{e\in #1}e}

\title[Distinct consecutive products]{Distinct Consecutive Products}
\author{Przemek Chojecki}
\date{\today. The proof was found by GPT-5.6 Sol, while iterating on previous attempts by the author.}

\begin{document}

\begin{abstract}
We prove that there is a density-one set of positive integers for which the products of all distinct consecutive blocks are distinct, answering a question of Erd\H{o}s and Graham.  The construction retains or rejects entire interiors of consecutive-prime gaps and is therefore stable under passage to longer prefixes.  Uniform affine-curve estimates control the witnesses with no previous rejected gap, and a scale-contracting forest controls every remaining witness unconditionally.
\end{abstract}

\maketitle

\section{Introduction}

Erd\H{o}s and Graham asked whether there is an increasing sequence $d_1<d_2<\cdots$ of density one such that the map
\[
 (u,v)\longmapsto \prod_{i=u}^{v}d_i,\qquad 1\leq u\leq v,
\]
is injective; see \cite[p.~84]{ErdosGraham}.  We answer the question affirmatively.

\begin{theorem}\label{thm:main}
There is a set $A\subseteq\N$ of natural density one such that distinct consecutive blocks in the increasing enumeration of $A$ have distinct products.
\end{theorem}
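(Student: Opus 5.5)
We construct $A$ so that it always contains every prime, and for each pair of consecutive primes $p<p'$ it either keeps the whole open interval $(p,p')\cap\N$ or removes all of it. The decision for a gap depends only on the primes and the earlier decisions, so the prefixes of $A$ are stable.

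\textbf{Why primes block most coincidences.} Suppose two distinct blocks $[u,v]$ and $[u',v']$ of the enumeration have equal products. If the two blocks are disjoint, or overlap only partially, we can cancel the common part. This leaves an identity
\[
\prodset{B}=\prodset{B'}
\]
between disjoint consecutive segments $B$ and $B'$ of $A$. Any prime lying in $B$ must then divide the product over $B'$. By Bertrand-type reasoning, a segment of $A$ that spans a full prime gap contains a prime $q$ exceeding half its largest element. Such a $q$ can divide only one or two elements of $B'$, namely $q$ itself or $2q$, and only if $2q\in A$.

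So a coincidence forces strong constraints. Either both segments avoid primes entirely, meaning each lies inside the interior of a single retained gap. Or the large primes $q$ in $B$ are matched with multiples $kq\in B'$ for small $k$.

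\textbf{First step: witnesses inside single gaps.} Take $B\subset(p,p')$, and let $B'$ be another run of composites, either in the same gap or in an earlier retained gap. An equality of products of consecutive integers of lengths $\ell,\ell'$ amounts to integer points on the affine curves
\[
x(x+1)\cdots(x+\ell-1)=y(y+1)\cdots(y+\ell'-1).
\]
Gap lengths are $O(p^{0.525})$, so these are points of bounded degree relative to the height. We would use uniform Bombieri--Pila / Heath-Brown type bounds for the number of points on such curves of height at most $X$, summed over $\ell,\ell'\le X^{0.525}$, together with the sparse known solutions (the Erd\H{o}s--Selfridge phenomenon). This should show that the gaps containing a witness have total length $o(X)$. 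We reject exactly those gaps.

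Since $\sum_{p\le X}(p'-p)\mathbf 1[\text{rejected}]=o(X)$, the density of $A$ is one.

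\textbf{Second step: witnesses spanning primes or rejected gaps.} Rejecting gaps creates new consecutive adjacencies, and these could produce fresh witnesses. Here we would run an induction over scales, organized as a forest. Each potential witness whose blocks straddle a rejected gap is mapped, by cancelling the largest prime $q$ against its multiple $kq$ and dividing out, to a witness at a strictly smaller scale (say at most $X/2$). Iterating terminates at a witness with no previous rejected gap, which the first step has already excluded. The key inequality needed is that the cancelled prime factors shrink the size by a definite factor while staying inside the same structural class.

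\textbf{Main obstacle.} I expect the hardest step to be this scale-contraction. We must show that every straddling witness really descends, and that the density loss summed along the forest stays $o(X)$. If the reduction fails in a residual case $q\mid 2q$ with both $q,2q\in A$, I would first try to also reject any gap containing $2q$ in a problematic configuration. I would then check that this costs only $O(X/\log X)$ density.
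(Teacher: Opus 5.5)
Your construction matches the paper's: keep every prime, keep or reject each gap interior wholesale, and decide greedily. The reduction to disjoint blocks by cancellation is also right. Both counting steps, however, have genuine gaps.

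\textbf{The first step.} Gap lengths up to $p^{0.525}$ mean witness blocks, hence curves, of degree up to about $X^{0.525}$, which is not bounded degree. Any uniform point count is at least $1$ per curve, so summing it over the $\gg X^{1.05}$ admissible pairs $(\ell,\ell')$ already exceeds $\pi(X)$. Worse, what must be $o(X)$ is the total \emph{length} of rejected gaps, each possibly $X^{0.525}$ long.

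The missing idea is a short/long split. Call a gap short if its length is at most $p^{\theta}$ for a small fixed $\theta$; the paper takes $\theta=1/20$. Long gaps are discarded wholesale, rejected or not, using a theorem on primes in almost all short intervals (Li's, with exponent $2/43+\varepsilon<\theta$), which makes their total length $o(X)$; Baker--Harman--Pintz cannot do this.

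For short raw witnesses you need two further ingredients. First, a point count uniform in the coefficients and polynomial in the degree (Castryck--Cluckers--Dittmann--Nguyen), rather than Bombieri--Pila or Heath-Brown bounds with unspecified dependence on $d$. Second, the fact that every absolutely irreducible component of $\prod_{i<r}(x-i)=\prod_{j<s}(y+j)$ with $r>s$ has degree at least $r/(r,s)\geq 2$; this is what gives a square-root saving, not Erd\H{o}s--Selfridge. Summing over $s\leq X^\theta$ and $r\ll X^\theta\log X$ then gives $X^{4/5+o(1)}$ raw gaps.

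Your structural heuristic is also off. A prime $q$ of the earlier block can be matched by any multiple $\alpha q$ with $\alpha\geq 2$ in the later block, not only by $2q$. The correct fact is that the \emph{later} block contains no prime, so it is a full interval inside one gap. The earlier block of a raw witness may be any full interval, primes included, not only a composite run in a single gap.

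\textbf{The second step.} Cancelling $q$ against $kq$ leaves a stray factor $k$. The resulting identity is not an equality of consecutive-block products, so there is no smaller witness of the same type to induct on. Also, raw witnesses are not ``excluded'' by the first step: they are exactly what causes rejections. What must be bounded is the total length of gaps rejected because of witnesses that cross earlier rejected gaps.

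The paper never transforms witnesses. It gives each nonraw rejected gap a parent, namely an earlier rejected gap $(p,q)$ crossed by the earlier block $E$. Since $p,q\in E$, both divide $\prod_{t=m}^{n}t$, so there are $\alpha p,\beta q\in[m,n]$ with $\alpha,\beta\geq 2$.
\begin{itemize}
\item If a common multiplier $j$ exists, then $jp$ and $jq$ lie in the child gap. A chain of such equal edges therefore ends in the unique gap containing $Jp$ and $Jq$, where $J$ is the product of the multipliers, and $J\leq Z^\theta/g$.
\item Otherwise $|\alpha p-\beta q|<s$ gives $p^2\leq ng+ps$. So a short parent of a short child below $Z$ satisfies $p\ll Z^{20/39}$: unequal edges contract scale.
\end{itemize}
The paper combines this with three further facts: each gap has at most $Z^{3\theta+o(1)}$ short children; there are at most $Z^{19/39+o(1)}$ long parents joined by unequal edges; and no equal edge joins a long parent to a short child. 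Summing over paths back to a raw gap or a long parent then bounds the total short rejected length by $O(X^{9/10+o(1)})$. This two-boundary-prime inequality and the forest count are precisely the mechanism your ``main obstacle'' lacks.
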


Two uniform results are used.  The first is the affine-curve estimate of Castryck--Cluckers--Dittmann--Nguyen \cite[Theorem~3]{CCDN}: there is an absolute $c>0$ such that, for every geometrically integral affine plane curve $C/\mathbb Q$ of degree $d$ and every $B\geq2$,
\begin{equation}\label{eq:CCDN}
 \#\bigl(C(\mathbb Z)\cap[-B,B]^2\bigr)
 \leq c d^3 B^{1/d}(\log B+d).
\end{equation}
The dependence is uniform in the coefficients of $C$.  The second is Li's theorem on primes in almost all short intervals \cite[Theorem~1.1]{Li}.  Fix a sufficiently large constant $D_0>2$ and then a sufficiently small $\varepsilon>0$.  That theorem permits us to arrange $\eta:=2/43+\varepsilon<1/20$ so that all but $O(X(\log X)^{-D_0})$ integers $n\in[X,2X]$ have a prime in $[n,n+n^\eta]$.

Put throughout
\begin{equation}\label{eq:parameters}
 \theta=\frac1{20},\qquad \rho=\frac1{2-\theta}=\frac{20}{39}.
\end{equation}
All implied constants may depend on the fixed $D_0$ and $\varepsilon$ and are otherwise absolute.  Expressions $X^{o(1)}$ are uniform over the dyadic scales occurring below.

\section{The construction and its witnesses}

We begin at the prime $2$ with $A_2=\{2\}$.  Suppose that $p<q$ are consecutive primes and that the construction has reached $p$.  Test the tentative prefix
\[
 B=A_p\cup\{p+1,p+2,\ldots,q\}.
\]
If two distinct consecutive blocks of $B$ have the same product, reject the gap and set $A_q=A_p\cup\{q\}$; otherwise retain the gap and set $A_q=B$.  Finally let $A=\bigcup_p A_p$.  Thus every prime is retained and, for each prime gap, either every or no integer in its interior is retained.

If two equal block products overlap, cancellation either leaves two disjoint equal block products or leaves a nonempty product equal to $1$.  The latter is impossible because all our integers are at least $2$.  We may consequently discuss only separated blocks, the earlier one being written $E$ and the later one $R$.

\begin{lemma}[Stability and canonical witnesses]\label{lem:stability}
Every finite prefix $A_p$ is collision-free, and so is $A$.  If the gap $(P,Q)$ is rejected, it admits a witness
\begin{equation}\label{eq:witness}
 \prodset{E}=\prod_{t=m}^{n}t,
\end{equation}
where $[m,n]\subset(P,Q)$ and $E$ is either a consecutive block of the established prefix or a full numerical interval in $(P,Q)$.  If $\ell=|E|$ and $s=n-m+1$, then $\ell>s$.
\end{lemma}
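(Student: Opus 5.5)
The plan has two parts: collision-freeness by induction along the primes, and then locating a witness inside a rejected gap using divisibility by the two endpoint primes. Throughout I use the reduction already made in the text, that any collision can be taken between two disjoint blocks $E$ (earlier) and $R$ (later).

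\emph{Collision-freeness.} I will induct on $p$, showing that every $A_p$ is collision-free. The base case $A_2=\{2\}$ is trivial. Suppose the gap $(p,q)$ is processed.
\begin{itemize}
\item If the gap is retained, then $A_q=B$, and $B$ passed the test, so it is collision-free.
\item If the gap is rejected, then $A_q=A_p\cup\{q\}$. By the induction hypothesis, any collision in $A_q$ must involve a block containing $q$.
\begin{itemize}
\item Two distinct blocks that both contain $q$ both end at $q$, so one strictly contains the other. The longer one then has strictly larger product, since all elements are at least $2$.
\item A block containing $q$ and a block not containing $q$ cannot have equal products. The prime $q$ divides the first product. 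Every element of the second block is a positive integer less than $q$, so $q$ does not divide its product.
\end{itemize}
\end{itemize}
For $A$ itself, note that elements are added in increasing order, all later additions exceed $q$, and every prime is kept. Hence $A\cap[1,q]=A_q$, and $A_q$ is an initial segment of the enumeration of $A$. Any two colliding blocks of $A$ lie in some finite prefix $A_q$, which contradicts the first part.

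\emph{Canonical witness.} Now let $(P,Q)$ be rejected, with $P<Q$ consecutive primes. The tentative prefix $B=A_P\cup\{P+1,\dots,Q\}$ then contains two disjoint blocks $E$ (earlier) and $R$ (later) with equal products. I will pin down where they lie in four steps.
\begin{enumerate}
\item Since $A_P$ is collision-free, $R$ contains some new element, so $\max R>P$.
\item If $Q\in R$, then $Q$ divides $\prod R$. But every element of $E$ is less than $Q$, so $Q\nmid\prod E$, a contradiction. Hence $R\subseteq(P,Q)$, unless $R$ reaches back to include $P$.
\item If $P\in R$, then every element of $E$ is less than $P$, so $P\nmid \prod E$, again a contradiction. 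Therefore $R=[m,n]\subset(P,Q)$ is a full numerical interval.
\item For $E$, either $E\subseteq A_P$, making it a block of the established prefix, or $E$ meets the gap. In the latter case $E$ cannot contain $P$. By Bertrand's postulate $Q<2P$, so no element of $R\subset(P,2P)$ is a multiple of $P$, and $P\nmid\prod R$. Thus $E$ lies entirely in the interior of the gap, to the left of $R$, and is a full numerical interval in $(P,Q)$.
\end{enumerate}

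\emph{The inequality $\ell>s$.} Every element of $E$ is strictly smaller than every element of $R$. If $\ell\le s$, we could pair each element of $E$ with a distinct element of $R$ that is strictly larger, and all elements are at least $2$. This gives $\prod E<\prod R$, a contradiction.

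The only delicate points are the divisibility arguments excluding $P$ and $Q$ from the blocks, and these rest solely on primality and $Q<2P$.
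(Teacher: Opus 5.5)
Your proof is correct and follows essentially the same route as the paper. Both arguments run an induction along the primes, where the test handles retained gaps and divisibility by the terminal prime $q$ handles rejected ones; both then exclude $P$ and $Q$ from $R$ (and $P$ from $E$) using primality and Bertrand's $Q<2P$, and obtain $\ell>s$ by comparing elements. The only cosmetic difference is in the rejected case: you treat two blocks both ending at $q$ directly by containment instead of invoking the cancellation remark, which is equally valid.
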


\begin{proof}
Assume inductively that $A_p$ is collision-free.  Any collision appearing in the tentative prefix is new.  After cancellation, its later block $R$ cannot contain a prime $r$, since then $r$ would divide a product of integers all strictly smaller than $r$.  If $R$ lay in $A_p$, the earlier block would do so as well, contradicting the induction; if $R$ met both $A_p$ and the newly adjoined elements, it would contain $p$.  Thus $R$ lies in the newly adjoined interior $(p,q)$ and is a full numerical interval $[m,n]$.

The earlier block $E$ cannot meet both $A_p$ and $(p,q)$.  Otherwise it contains $p$, so $p$ divides the right side of \eqref{eq:witness}; but Bertrand's postulate gives $q<2p$, and no integer of $(p,q)$ is divisible by $p$.  Thus $E$ has one of the asserted forms.  Since every element of $E$ is smaller than every element of $R$, equality of the products is impossible when $\ell\leq s$.

If the tentative prefix is accepted, it is collision-free by the test.  If it is rejected, $A_q=A_p\cup\{q\}$ is collision-free: a new collision would have a later block containing the terminal prime $q$, making $q$ divide a product of smaller integers.  This proves the induction.  A collision in $A$ would occur in a finite prefix, so none exists.
\end{proof}

Call a rejected gap \emph{raw} if it has a witness \eqref{eq:witness} in which $E$ crosses no earlier rejected gap; in this terminology a block crosses $(p,q)$ when it contains both $p$ and $q$.  A raw $E$ is a full interval: the only missing integers of the established prefix lie strictly between the endpoints of rejected prime gaps.  For every nonraw rejected gap, choose one witness and one earlier rejected gap crossed by $E$, and call the latter its \emph{parent}.  A parent has smaller right endpoint than its child, so this turns the rejected gaps into a directed forest.

\begin{lemma}[Parent--child alternatives]\label{lem:edge}
Let $(p,q)$, of length $g=q-p$, be the chosen parent of a gap rejected by the witness \eqref{eq:witness}.  There are integers $\alpha,\beta\geq2$ such that $\alpha p,\beta q\in[m,n]$.  Exactly one of the following alternatives may be chosen:
\begin{enumerate}
 \item[(E)] for some $j\geq2$, both $jp$ and $jq$ belong to $[m,n]$;
 \item[(U)] no such $j$ exists, and $p^2\leq ng+ps$.
\end{enumerate}
An edge of type \emph{(E)} or \emph{(U)} will be called equal or unequal, respectively.
\end{lemma}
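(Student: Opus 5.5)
The plan is to read off the divisibility constraints that the parent imposes on the right-hand side of \eqref{eq:witness}, and then treat the absence of a common multiplier $j$ as a short inequality chase.

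\emph{Existence of $\alpha$ and $\beta$.} The parent $(p,q)$ is crossed by $E$, so $p,q\in E$; both are primes and hence lie in the established prefix. Thus $p$ and $q$ both divide $\prodset{E}=\prod_{t=m}^{n}t$. The parent is an earlier rejected gap, and by Lemma~\ref{lem:stability} the interval $[m,n]$ lies inside the child gap $(P,Q)$ with $P\geq q$. So every $t\in[m,n]$ satisfies $t>q>p$. Since $p$ is prime, it divides some $t\in[m,n]$, say $t=\alpha p$, and $t>p$ forces $\alpha\geq2$. The same argument gives $\beta q\in[m,n]$ with $\beta\geq2$.

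\emph{The dichotomy.} If some $j\geq2$ has $jp,jq\in[m,n]$, we choose (E). Otherwise we must prove $p^2\leq ng+ps$, and then (U) holds by definition; the two alternatives are mutually exclusive by their wording. So fix $\beta$ with $\beta q\in[m,n]$.
\begin{itemize}
 \item Since $\beta p<\beta q\leq n$ and $\beta p\notin[m,n]$ by assumption, we get $\beta p<m$.
 \item Choose $\alpha$ with $\alpha p\in[m,n]$. Then $\alpha p\geq m>\beta p$, so $\alpha\geq\beta+1$, and therefore $(\beta+1)p\leq\alpha p\leq n$.
 \item On the other side, $\beta q\geq m=n-s+1>n-s$, i.e.\ $\beta p+\beta g>n-s$.
\end{itemize}
Combining the last two bullets gives
\[
 (\beta+1)p\leq n<\beta p+\beta g+s,
\]
hence $p<\beta g+s$. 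Multiplying by $p$ and using $\beta p<\beta q\leq n$ yields
\[
 p^2<\beta p\,g+ps\leq ng+ps,
\]
which is the inequality required in (U).

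The only step needing care is the first: it uses that every element of $[m,n]$ exceeds $q$, so that the multipliers are at least $2$. This follows from the parent being earlier and from $[m,n]\subset(P,Q)$ in Lemma~\ref{lem:stability}. The rest is elementary.
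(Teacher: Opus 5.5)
Your proof is correct and follows the paper's argument. Both proofs use the same two steps. First, $p$ and $q$ divide $\prod_{t=m}^{n}t$ with multipliers at least $2$ because $q<m$. Second, when (E) fails, the chase $(\beta+1)p\le n<\beta q+s$ gives $p<\beta g+s$, and $\beta p\le n$ then gives $p^2\le ng+ps$.

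The only difference is that you show directly that $\beta p<m$, which forces $\alpha>\beta$. The paper instead also treats the case $\alpha<\beta$, and your observation shows that case cannot occur once (E) fails.
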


\begin{proof}
The two boundary primes $p,q$ occur in $E$, hence divide the right side of \eqref{eq:witness}.  Their multiples in $[m,n]$ have multipliers at least $2$, since $q<m$.  If an equal multiplier occurs, choose (E).  Otherwise $|\alpha p-\beta q|\leq s-1$.  When $\alpha>\beta$, using $q=p+g$ and $\beta\leq n/p$ gives $p\leq\beta g+s\leq ng/p+s$; when $\alpha<\beta$, the same inequality gives $p\leq s$.  Both conclusions imply the inequality in (U).
\end{proof}

For a rejected gap $v=(p_v,q_v)$, write $x(v)=q_v$ and $d(v)=q_v-p_v$.  It is \emph{short} if $d(v)\leq p_v^\theta$ and \emph{long} otherwise.  Bounded gaps and endpoints can always be absorbed into the implied constants.

\begin{lemma}[Long gaps have negligible total length]\label{lem:long-tail}
For all large $Y$,
\[
 \sum_{\substack{Y\leq p<2Y\\p^+-p>p^\theta}}(p^+-p)
 \ll Y(\log Y)^{-D_0}.
\]
Here $p^+$ denotes the prime following $p$.  Consequently, the sum of the lengths of all long prime gaps with left endpoint at most $X$ is $o(X)$.
\end{lemma}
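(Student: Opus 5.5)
We deduce the bound directly from Li's theorem on primes in almost all short intervals, as stated in the introduction. The exponent there is $\eta=2/43+\varepsilon<1/20=\theta$, and all but $O(X(\log X)^{-D_0})$ integers $n\in[X,2X]$ have a prime in $[n,n+n^\eta]$. The idea is that a long gap forces many integers to be exceptional for Li's theorem, and the number of exceptional integers is small.

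Fix a prime $p\in[Y,2Y)$ with $g=p^+-p>p^\theta$. Consider the integers $n$ with $p<n\leq p^+-n^\eta-1$, or more simply $p<n\le p^+-(3Y)^\eta-1$. For each such $n$, the interval $[n,n+n^\eta]$ lies strictly inside $(p,p^+)$, so it contains no prime and $n$ is exceptional. The number of these $n$ is at least $g-(3Y)^\eta-2$. Since $g>p^\theta\ge Y^\theta$ and $\eta<\theta$, we have $(3Y)^\eta+2=o(Y^\theta)$, so for large $Y$ this count is at least $g/2$. Distinct primes $p$ give disjoint interiors $(p,p^+)$, so the sets of exceptional $n$ obtained from different $p$ are disjoint.

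The $n$ produced this way lie in $(Y,4Y)$, because Bertrand's postulate gives $p^+<2p<4Y$. Apply Li's theorem at the two dyadic scales $X=Y$ and $X=2Y$. This gives
\[
\frac12\sum_{\substack{Y\le p<2Y\\ p^+-p>p^\theta}}(p^+-p)\le \#\{\text{exceptional } n\in[Y,4Y]\}\ll Y(\log Y)^{-D_0},
\]
which is the first claim. The only point needing care is the uniformity of $n^\eta$ across the range, and bounding $n^\eta\le(4Y)^\eta$ handles it.

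For the consequence, cover $[2,X]$ by the dyadic ranges $[2^{-k-1}X,2^{-k}X)$. Take $D_0>1$, so the long gaps with left endpoint in $[Y,2Y)$ have total length $\ll Y(\log Y)^{-D_0}$. Split the dyadic sum at $Y=\sqrt X$. For $Y\ge\sqrt X$, each term is $\ll Y(\log X)^{-D_0}$, and these sum geometrically to $\ll X(\log X)^{-D_0}$. For $Y<\sqrt X$, all gaps together have total length at most $O(\sqrt X)$. Hence the total length of long gaps with left endpoint at most $X$ is $o(X)$.

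The main obstacle is the bookkeeping that matches Li's statement exactly: the interval endpoints, the exceptional set being taken on $[X,2X]$, and the strict inequality $\eta<\theta$. These are routine, since the margin $\theta-\eta>0$ absorbs every boundary loss.
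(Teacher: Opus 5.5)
Your proposal is correct and follows the paper's proof essentially step for step: at least $g/2$ Li-exceptional starting points inside each long gap, disjointness of gap interiors, Li's theorem on $[Y,2Y]$ and $[2Y,4Y]$, and a dyadic summation split at $\sqrt X$. The one slip is the cutoff $(3Y)^\eta$. It should be $(4Y)^\eta$, because $n$ can approach $4Y$; you correct this yourself at the end.
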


\begin{proof}
Use the fixed $\eta$ supplied above.  By Bertrand's postulate, a gap $(p,p^+)$ with $Y\leq p<2Y$ is contained in $[Y,4Y]$.  If its length $g$ exceeds $p^\theta$, then all but $O(Y^\eta+1)$ integers $a\in(p,p^+)$ have $[a,a+a^\eta]\subset(p,p^+)$.  For large $Y$ this supplies at least $g/2$ exceptional starting points for Li's theorem.  The sets of starting points supplied by distinct prime gaps are disjoint.  Applying that theorem on $[Y,2Y]$ and $[2Y,4Y]$ proves the displayed estimate.  In the dyadic summation, gaps whose left endpoints are below $X^{1/2}$ contribute $O(X^{1/2})$ by disjointness; on the remaining scales $\log Y\asymp\log X$.  This proves the final assertion.
\end{proof}

\section{Uniform counting of raw witnesses}

We first record the component-degree observation that makes \eqref{eq:CCDN} uniform for every split-product pattern.  It also avoids any irreducibility or low-genus classification.

\begin{proposition}[Split-product curves]\label{prop:split}
Let $r>s\geq1$ and
\[
 f(X)=\prod_{i=1}^{r}(X-a_i),\qquad
 g(Y)=\prod_{j=1}^{s}(Y+b_j),\qquad F(X,Y)=f(X)-g(Y),
\]
where the shifts are integers.  Put $h=(r,s)$ and $r=ph$, $s=qh$.  Every absolute irreducible component of $F=0$ has total degree $pe$ for some integer $e\geq1$.  In particular, uniformly in the shifts and for $B\geq2$,
\begin{equation}\label{eq:split-bound}
 \#\{(x,y)\in[-B,B]^2\cap\mathbb Z^2:F(x,y)=0\}
 \ll B^{1/p}(r^3\log B+r^4)
 \ll B^{1/2}(r^3\log B+r^4).
\end{equation}
\end{proposition}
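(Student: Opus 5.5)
The plan is to read off the degrees of the components from the weighted-homogeneous leading form of $F$, and then sum \eqref{eq:CCDN} over the components. Since $r>s$ we have $p>q\geq1$, so in particular $p\geq2$, and $(p,q)=1$. Give $X$ weight $q$ and $Y$ weight $p$. Every monomial $X^iY^j$ occurring in $F$ comes either from $f$ (with $j=0$, $i\leq r$) or from $g$ (with $i=0$, $j\leq s$). Hence the weighted degree of $F$ is $qr=ps=pqh$, and its weighted leading form is
\[
 F^{\mathrm{top}}(X,Y)=X^{ph}-Y^{qh}=\prod_{\zeta^h=1}\bigl(X^p-\zeta Y^q\bigr).
\]
Each factor $X^p-\zeta Y^q$ is irreducible over $\overline{\mathbb Q}$ because $(p,q)=1$.

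First, I will show that every factor has the claimed degree. Let $G$ be an absolutely irreducible factor of $F$ over $\overline{\mathbb Q}$, with $F=GH$. Taking weighted leading forms is multiplicative, so $G^{\mathrm{top}}H^{\mathrm{top}}=F^{\mathrm{top}}$. Therefore $G^{\mathrm{top}}$ is, up to a scalar, a product of some $e\geq1$ of the factors $X^p-\zeta Y^q$; here $e\geq1$ because $G$ is nonconstant, and its top form has positive weighted degree since both weights are positive. So $G^{\mathrm{top}}$ has weighted degree $pqe$, and it contains the monomial $X^{pe}$ with nonzero coefficient.

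Next I will bound the total degree of $G$. Every monomial $X^iY^j$ of $G$ satisfies $qi+pj\leq pqe$. Since $p>q$, this gives
\[
 i+j\leq i+\tfrac{p}{q}j=\tfrac{1}{q}(qi+pj)\leq pe .
\]
Equality holds for $X^{pe}$. Hence $\deg G=pe$, which is the structural assertion.

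For the count, I will split the integer zeros of $F$ according to the distinct components $G_1,\dots,G_k$. Their degrees satisfy $d_i=pe_i$, $d_i\geq p$ and $\sum d_i\leq r$. The components come in two kinds.
\begin{enumerate}
 \item[(a)] A component defined over $\mathbb Q$ up to scalar is geometrically integral. By \eqref{eq:CCDN} it contributes at most
 \[
  c\,d_i^3B^{1/d_i}(\log B+d_i)\leq c\,d_i^3B^{1/p}(\log B+r).
 \]
 \item[(b)] A component not defined over $\mathbb Q$ has every rational point also lying on a distinct Galois conjugate component. By Bezout it therefore has at most $d_i^2\leq r^2$ rational points.
\end{enumerate}
Summing over $i$ and using $\sum d_i^3\leq r^3$ together with $k\leq r$ gives
\[
 \ll B^{1/p}\bigl(r^3\log B+r^4\bigr)+r^3 .
\]
The term $r^3$ is absorbed since $B\geq2$. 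Finally, $B^{1/p}\leq B^{1/2}$ because $p\geq2$. All constants are uniform in the shifts, since \eqref{eq:CCDN} is.

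The step I expect to need the most care is the claim that weighted leading forms multiply and that the factorization of $X^{ph}-Y^{qh}$ into the irreducibles $X^p-\zeta Y^q$ is unique. Both follow from $\overline{\mathbb Q}[X,Y]$ being a graded unique factorization domain for the positive weighting. Everything else is bookkeeping: the degree computation, and handling the non-$\mathbb Q$-rational components by Bezout.
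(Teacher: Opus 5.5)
Your proposal is correct and follows the paper's proof essentially step for step. It uses the same weighting $(q,p)$ and the factorization of the top form $X^{ph}-Y^{qh}$ into the absolutely irreducible $X^p-\zeta Y^q$, the bound $i+j\le pe$ from $qi+pj\le pqe$, \eqref{eq:CCDN} on the $\mathbb Q$-rational components, and B\'ezout with a Galois conjugate for the others. Your minor deviations are harmless: you do not invoke squarefreeness of the top form, and your crude count $k\le r$ gives an absorbed $r^3$ term.
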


\begin{proof}
Give $X$ weight $q$ and $Y$ weight $p$.  The highest weighted part of $F$ is
\[
 X^r-Y^s=\prod_{\zeta^h=1}(X^p-\zeta Y^q).
\]
The displayed factors are distinct and irreducible over $\overline{\mathbb Q}$ because $(p,q)=1$.  Multiplicativity of weighted initial forms and the squarefreeness of $X^r-Y^s$ show that the weighted initial form of an absolute factor $H$ of $F$ is, up to a constant, the product of $e\geq1$ of them.  It contains the monomials $X^{pe}$ and $Y^{qe}$, while every monomial $X^iY^j$ of $H$ satisfies $qi+pj\leq pqe$.  Since $p>q$, this implies $i+j\leq pe$, and therefore $\deg H=pe\geq p\geq2$.

Apply \eqref{eq:CCDN} to the absolute components defined over $\mathbb Q$.  Absolute components not defined over $\mathbb Q$ contribute only $O(r^2)$ rational points: such a point lies also on a distinct Galois-conjugate component, and B\'ezout's theorem applies.  If $\delta_\nu$ ranges over all absolute component degrees, then $\delta_\nu\geq p$, $\sum_\nu\delta_\nu\leq r$, $\sum_\nu\delta_\nu^3\leq r^3$, and $\sum_\nu\delta_\nu^4\leq r^4$.  Summing \eqref{eq:CCDN} proves \eqref{eq:split-bound}.
\end{proof}

\begin{lemma}[Raw gaps]\label{lem:raw}
The number of short raw rejected gaps with right endpoint at most $X$ is $O(X^{4/5+o(1)})$.
\end{lemma}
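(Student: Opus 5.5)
Each short raw rejected gap $(P,Q)$ with $Q\le X$ comes with a witness \eqref{eq:witness} in which $E$ is a full numerical interval $[a,a+\ell-1]$, by Lemma~\ref{lem:stability} and the remark after it. The right side is a full interval $[m,n]\subset(P,Q)$ of length $s$, and $\ell>s$. Since the gap is short, $s\le d\le P^\theta\le X^{1/20}$. Also $\ell$ is bounded: the right side is at most $n^s$, while $\prod E\ge \ell!$. This gives $\ell\log(\ell/e)\ll s\log X$. Hence $\ell\le X^{1/20+o(1)}$, and indeed $\ell\ll s\log X$ up to logarithmic factors. I will count pairs of parameters $(\ell,s)$ and, for each, the solutions $(a,m)$ of
\[
 \prod_{i=0}^{\ell-1}(a+i)=\prod_{j=0}^{s-1}(m+j),\qquad m\le X .
\]
Distinct gaps have disjoint $[m,n]$, so each solution is charged to at most one gap. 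The number of gaps is then at most the number of triples $(\ell,s,m)$ that admit some $a$.

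I split dyadically in $m\asymp M\le X$. The case $E$ inside the same gap is included, since nothing here uses the location of $a$. Next $a$ is bounded in terms of $M$. Since $\ell>s$, we have $a^\ell\le \prod E=\prod R\le (2M)^s$, so $a\le (2M)^{s/\ell}\le 2M$. Apply Proposition~\ref{prop:split} with $r=\ell$, $f(X)=\prod(X+i)$ and $g(Y)=\prod(Y+j)$, after shifting the variables to be centred. The curve $F=0$ has no component of degree $<2$, and \eqref{eq:split-bound} with $B\asymp M$ gives $O(M^{1/2}\ell^4\log M)$ solutions. Summing over $s\le X^{1/20}$ and $\ell\ll X^{1/20+o(1)}$, the crude bound is $X^{1/2+1/10+4/20+o(1)}=X^{4/5+o(1)}$. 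This matches the target exactly, so I expect the exponents were tuned to this count. The $\ell^4$ factor is the dominant one, and the restriction $\ell\le X^{\theta+o(1)}$ from the factorial size comparison is essential.

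The main obstacle is making the $\ell$ range honest. I must check carefully that $\ell\le X^{\theta+o(1)}$ and not something like $s\log X/\log\log X$ times a larger quantity. If $\ell$ were allowed up to $s\log X$, we would pick up only an extra $X^{o(1)}$, which is harmless. A second point is the uniformity of \eqref{eq:CCDN} in the shifts, but Proposition~\ref{prop:split} provides exactly that. The exponent $1/p\le 1/2$ uses $p=\ell/(\ell,s)\ge2$, which holds because $\ell>s$. If the crude count overshoots, the fallback is to use the better exponent $1/p$ when $\ell/(\ell,s)$ is large, and to treat separately the case $\ell=2s/h\cdot h$ with small $p$. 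For this proposal, though, the crude sum suffices.
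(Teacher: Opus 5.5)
Your proposal is correct and is essentially the paper's proof. Both write a raw witness as an integer point on a split-product curve with $s\le X^{\theta}$ and $\ell\le X^{\theta+o(1)}$, apply Proposition~\ref{prop:split} with exponent $1/2$ and the $\ell^{4}$ factor for each pair $(\ell,s)$, sum to get $X^{1/2+6\theta+o(1)}=X^{4/5+o(1)}$, and use that $m$ determines the gap.

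The differences are cosmetic. You bound $\ell$ via $\ell!$ rather than the paper's $2^{\ell}\le X^{s}$. Your dyadic split in $m$ is unnecessary. Your claim $\prod R\le(2M)^{s}$ is slightly loose, since $n$ can exceed $2M$; the trivial bound $a<m$ does the job instead.
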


\begin{proof}
Choose a raw witness.  Both its sides are full intervals, so for some $r>s\geq1$ and positive $x,y\leq X$ it has the form
\begin{equation}\label{eq:factorial-curve}
 \prod_{i=0}^{r-1}(x-i)=\prod_{j=0}^{s-1}(y+j).
\end{equation}
Since the rejected gap is short, $s\leq X^\theta$.  Moreover $2^r\leq X^s$, whence $r\leq L:=\lceil X^\theta\log_2 X\rceil$.  For fixed $(r,s)$, Proposition~\ref{prop:split} bounds the possible $(x,y)$ by $O(X^{1/2}(r^3\log X+r^4))$.  Summing over $s<r\leq L$ gives
\[
 O\bigl(X^{1/2}(L^5\log X+L^6)\bigr)=O(X^{1/2+6\theta+o(1)})=O(X^{4/5+o(1)}).
\]
The interval on the right of \eqref{eq:factorial-curve} lies in at most one prime gap, so the same bound holds for gaps.
\end{proof}

\section{The witness forest}

We now count all short rejected gaps.  The loss in the following elementary out-degree estimate is harmless because unequal edges contract scale.

\begin{lemma}[Branches and contraction]\label{lem:branches}
For $Z\geq2$ the following assertions hold, with factors $Z^{o(1)}$ allowed.
\begin{enumerate}
 \item A fixed rejected gap has at most $Z^{3\theta+o(1)}$ short children with right endpoint at most $Z$.
 \item If an unequal edge has a short parent and a short child with right endpoint at most $Z$, then the parent's right endpoint is $O(Z^\rho)$.
 \item The number of long gaps that can be the parent, by an unequal edge, of a short child with right endpoint at most $Z$ is $O(Z^{1-\rho+o(1)})$.
 \item An equal edge cannot have a long parent and a short child, apart from bounded endpoints.
\end{enumerate}
\end{lemma}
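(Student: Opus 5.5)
Each of the four assertions is read off from Lemma~\ref{lem:stability}, Lemma~\ref{lem:edge} and the definitions of short and long. Throughout, the child gap $(P,Q)$ is short with $Q\leq Z$, and its witness \eqref{eq:witness} satisfies $[m,n]\subset(P,Q)$. Hence $s\leq Q-P\leq Z^{\theta}$ and $n\leq Z$. Since every element of $E$ is at least $2$ and $\ell>s$, we get $2^\ell\leq\prod_{E}e\leq Z^s$, so $\ell\leq s\log_2 Z\leq Z^{\theta+o(1)}$. The parent $(p,q)$ is crossed by $E$, so $p,q\in E$ and both divide $\prod_{t=m}^n t$. In particular $p\leq n\leq Z$.

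For (1), fix the parent $(p,q)$. The key observation is that $p\mid\prod_{t=m}^n t$ forces some multiple $\alpha p$ to lie in $[m,n]$, and this interval sits inside the child gap. The child gap is therefore determined by the multiplier $\alpha$, since it is the prime gap containing $\alpha p$. A direct count of $\alpha\leq Z/p$ is too weak, so I would instead bound $\alpha$ using the size of $E$. Because $E$ contains $p$ and has length $\ell\leq Z^{\theta+o(1)}$, all elements of $E$ lie in $[p-\ell,p+\ell]$ up to the missing rejected interiors. Then $n^s\leq\prod_{t=m}^n t=\prod_E e\leq(2p)^{\ell}$, which combined with $n^s\geq m^s$ bounds $\log n/\log p$ by $\ell/s$.

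This bound alone still does not pin $\alpha$ down. I expect (1) to be the main obstacle, and I would count through the pair $(\ell,s)$ together with the offset of $E$ relative to $p$. There are at most $Z^{\theta+o(1)}$ choices for $s$, at most $Z^{\theta+o(1)}$ for $\ell$, and at most $\ell\leq Z^{\theta+o(1)}$ for the position of $p$ inside $E$. These choices fix $E$, since the established prefix near $p$ is determined. The left side of \eqref{eq:witness} is then a fixed integer $N$. The map $m\mapsto\prod_{t=m}^{m+s-1}t$ is strictly increasing, so $m$ is determined by $N$ and $s$. This gives at most $Z^{3\theta+o(1)}$ children, matching the claim.

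For (2), the edge is unequal, so (U) gives $p^2\leq ng+ps$. Here $g\leq p^\theta$ because the parent is short, $n\leq Z$, and $s\leq Z^\theta$. If $ps$ dominates, then $p\leq 2s\leq 2Z^\theta$, which is certainly $O(Z^\rho)$. Otherwise $p^{2-\theta}\leq 2Z$, so $p\ll Z^{1/(2-\theta)}=Z^\rho$. The parent's right endpoint is $q\leq 2p$, which gives (2).

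For (3), use (U) again, now without a bound on $g$. I would split according to whether $p\leq Z^\rho$ or $p>Z^\rho$. Long gaps with $p\leq Z^\rho$ number at most $\pi(Z^\rho)$, which is $Z^{\rho+o(1)}$. This is larger than $Z^{1-\rho}$, since $\rho>1/2$, so this case needs a better bound. Here I would use Lemma~\ref{lem:long-tail}: long gaps at scale $Y$ have total length $\ll Y(\log Y)^{-D_0}$, and each has length at least $Y^\theta$, so there are at most $Y^{1-\theta}$ of them at scale $Y$. The assertion is really tailored to the case $p>Z^\rho$. There (U) gives $g\geq(p^2-ps)/n\gg p^2/Z$, because $s\leq Z^\theta$ is much smaller than $p$. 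The gaps are disjoint and lie below $2Z$, so their number is at most $Z/(Z^{2\rho}/Z)=Z^{2-2\rho}$. I would then sharpen this dyadically over $p\sim Y\geq Z^\rho$. At scale $Y$ each such gap has length at least $Y^2/Z$, and there are at most $4Y$ disjoint integers available, giving count $\ll Z/Y\leq Z^{1-\rho}$. For the remaining range $p\leq Z^\rho$, the long gaps have $g>p^\theta$, and I would combine this with $p^2\leq Zg+ps$ and $g\leq p$ (Bertrand). This makes $p^{2}\ll Zg$ compatible only in a restricted range, and I would use the disjointness counting in that range as well. Summing the dyadic scales gives $Z^{1-\rho+o(1)}$.

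For (4), an equal edge has both $jp$ and $jq$ in $[m,n]$, so $jg=jq-jp\leq s-1<Q-P\leq Q^\theta$. A long parent has $g>p^\theta$, so $p^\theta<Q^\theta/j$. On the other hand $jp\geq m>P$ and $j\leq n/p$. Then $Q^\theta\geq jg>jp^\theta\geq j^{1-\theta}(jp)^\theta\geq(jp)^{\theta}$, while $jp>P\geq Q/2$. This leaves $j^{1-\theta}\ll 1$ and $Q$ bounded, which is what (4) asserts: such edges occur only at bounded endpoints.
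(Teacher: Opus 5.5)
Your proposal is correct and follows the paper's proof essentially step for step: (1) counts over $s$, $\ell$ and the position of $p$ in $E$, then uses monotonicity in $m$; (2) absorbs the $ps$ term; (3) uses the dyadic bound $\min(Y^{1-\theta},Z/Y)$ from disjointness of gap interiors; and (4) uses the chain $jg>j^{1-\theta}(jp)^\theta$. Two loose ends are easy to close: in (3) your count $Y^{1-\theta}$ already settles the range $Y\le Z^\rho$ because $\rho(1-\theta)=1-\rho$, with no further use of (U), and in (4) your inequalities give $j^{1-\theta}<2^\theta$, which contradicts $j\ge2$ outright rather than merely bounding $Q$ (using $Q-P\le P^\theta$ directly, as the paper does, gives the contradiction immediately).
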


\begin{proof}
For a short child, the later interval in \eqref{eq:witness} has $s\leq Z^\theta$; also $2^r\leq Z^s$, so $r=|E|\leq L_Z:=\lceil Z^\theta\log_2(2Z)\rceil$.  For each $r$, at most $r-1$ consecutive $r$-blocks cross a fixed parent adjacency.  Once such a block and $s$ are fixed, strict monotonicity of $m\mapsto\prod_{i=0}^{s-1}(m+i)$ determines at most one $m$, and its interval lies in at most one prime gap.  Hence there are at most $Z^\theta L_Z^2=Z^{3\theta+o(1)}$ children.

For an unequal edge, Lemma~\ref{lem:edge} gives $p^2\leq ng+ps$.  If the parent is short and the child lies below $Z$, then $g\leq p^\theta$, $n\leq Z$, and $s\leq Z^\theta$.  Thus $p^2\leq Zp^\theta+pZ^\theta$.  If $p\leq2Z^\theta$ the claim is immediate; otherwise the second term is absorbed, giving $p^{2-\theta}\ll Z$ and $p=O(Z^\rho)$.  Its right endpoint satisfies the same bound.

For the third assertion, note first that $p<Z$, and take parent left endpoints $p\asymp P$.  The case $P\ll Z^\theta$ is smaller than the claimed bound.  Otherwise the term $ps$ may be absorbed, and the unequal-edge inequality and longness give
\[
 g\gg\max(P^\theta,P^2/Z).
\]
Prime-gap interiors are disjoint, and Bertrand's postulate confines the gaps with $p\asymp P$ to an interval of length $O(P)$.  Their number is therefore at most
\[
 \min(P^{1-\theta},Z/P).
\]
The two terms agree at $P=Z^\rho$, where their common value is $Z^{\rho(1-\theta)}=Z^{1-\rho}$.  Summing over dyadic $P$ proves the assertion.

Finally, suppose that an equal edge with multiplier $j$ joins $(p,q)$ to the child gap $(P,Q)$.  Since $jp,jq\in(P,Q)$, we have $P<jp<jq<Q$ and $Q-P>jg$.  If the parent were long and the child short, then
\[
 Q-P>jg>jp^\theta=j^{1-\theta}(jp)^\theta>(jp)^\theta>P^\theta,
\]
contradicting $Q-P\leq P^\theta$.
\end{proof}

\begin{lemma}[Equal-chain compression]\label{lem:equal-chain}
Fix a rejected gap $(p,q)$ of length $g$.  There are $O(1+Z^\theta/g)$ short gaps of right endpoint at most $Z$ that can be reached from it by a nonempty path consisting only of equal edges.
\end{lemma}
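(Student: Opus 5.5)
The plan is to follow an equal-edge path one step at a time and record how the gap length grows. Let the path be $v_0=(p,q),v_1,\dots,v_k$, where each $v_{i+1}$ is a child of $v_i$ by an equal edge with multiplier $j_i\geq2$. The last step of Lemma~\ref{lem:branches} gives the basic fact: if $(p',q')$ is the parent of $(P,Q)$ by an equal edge with multiplier $j$, then $jp'<jq'$ both lie in $(P,Q)$. Hence $P<jp'$ and $Q-P>j(q'-p')$. Iterating along the path,
\[
d(v_k)>j_0j_1\cdots j_{k-1}\,g\geq 2^k g,\qquad x(v_k)\geq P_k>j_{k-1}\cdots j_0\,p .
\]
Suppose the final gap $v_k$ is short with right endpoint at most $Z$. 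Then $d(v_k)\leq Z^\theta$, so $J:=j_0\cdots j_{k-1}<Z^\theta/g$. In particular $k\leq\log_2(Z^\theta/g)$, and a path can reach a short gap at all only when $g<Z^\theta$.

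Next I want to show that the endpoint $v_k$ is essentially determined by the product $J$. The interval $(P_k,Q_k)$ of $v_k$ contains $Jp$ and $Jq$. At each step $j_ip_i$ and $j_iq_i$ lie in the next gap, so by induction $Jp$ lies in $(P_k,Q_k)$. Prime-gap interiors are disjoint, so $v_k$ is the unique prime gap whose interior contains $Jp$. The map from paths to reachable gaps therefore factors through the integer $J$. I must check that $Jp$ really is interior at every step, and that the intermediate gaps on the path need not be short; the argument never used shortness before the last step.

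It remains to count admissible $J$, and this is the main obstacle. Counting all integers $J<Z^\theta/g$ gives only $O(Z^\theta/g)$, which is already the claimed bound $O(1+Z^\theta/g)$. So the naive count suffices, and the $1$ absorbs the case $Z^\theta/g<2$, where no $J\geq2$ exists and the count is $0$. I would still check two points carefully. First, I need the strict inequality $d(v_{i+1})>j_i d(v_i)$ rather than merely $\geq$; this follows because $jp'$ and $jq'$ are interior and $j(q'-p')=jq'-jp'$. Second, the bound $d(v_k)\leq x(v_k)^\theta\leq Z^\theta$ holds for every short gap below $Z$. With these, the number of short gaps reachable by nonempty equal paths is at most $\#\{J\geq2: J<Z^\theta/g\}\leq 1+Z^\theta/g$. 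This proves the lemma.
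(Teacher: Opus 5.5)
Your proposal is correct and is essentially the paper's argument: by induction the terminal gap contains both $Jp$ and $Jq$, so it is determined by $J$, and $Jg<d(v_k)\leq Z^\theta$ then bounds the number of admissible $J$. The only slip is the inequality $x(v_k)\geq P_k>Jp$, which is backwards (in fact $P_k<Jp$, and the correct bound is $x(v_k)=Q_k>Jq$), but it is never used in the count.
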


\begin{proof}
If the successive equal multipliers are $j_1,\ldots,j_k$ and $J=j_1\cdots j_k$, induction shows that the terminal prime gap contains both $Jp$ and $Jq$: if $(P,Q)$ contains $Jp,Jq$, then an equal edge of multiplier $j$ has a full right interval containing $jP,jQ$, and hence also $jJp,jJq$.  For fixed $J$ there is at most one such prime gap.  Its length is at least $Jg$ and, by shortness, at most $Z^\theta$; hence $J\leq Z^\theta/g$.
\end{proof}

\begin{proposition}[Forest bound]\label{prop:forest}
The sum of the lengths of all short rejected gaps with right endpoint at most $X$ is $O(X^{9/10+o(1)})$.
\end{proposition}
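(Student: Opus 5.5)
We want to bound the total length of short rejected gaps with right endpoint at most $X$. Each such gap has length at most $X^\theta$. It therefore suffices to bound their \emph{number} by $O(X^{9/10-\theta+o(1)})=O(X^{17/20+o(1)})$. Alternatively, when the count arises from an equal chain out of a gap of length $g$, we can weight it directly by lengths.

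The plan is to trace each short rejected gap $w$ upward through the forest. Follow parents from $w$ until one of three things happens: we reach a raw gap, we reach a long gap, or we cross an unequal edge. Consider the maximal final segment of the path to $w$ consisting only of equal edges. By Lemma~\ref{lem:branches}(4), every gap on this equal segment below a short gap is short, except possibly its top. Write $u$ for the top of the equal segment.

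There are three cases for $u$:
\begin{enumerate}
 \item[(a)] $u$ is a short raw gap. Lemma~\ref{lem:raw} gives $X^{4/5+o(1)}$ choices.
 \item[(b)] $u$ is the child of an unequal edge. Its parent $v$ is either short with $x(v)=O(X^\rho)$ by Lemma~\ref{lem:branches}(2), or long and one of $O(X^{1-\rho+o(1)})$ gaps by Lemma~\ref{lem:branches}(3).
 \item[(c)] $u$ itself is long. By Lemma~\ref{lem:branches}(4) this cannot happen when its equal child is short, so $u$ must be short and we are back in (a) or (b).
\end{enumerate}

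From $u$, Lemma~\ref{lem:equal-chain} gives $O(1+X^\theta/d(u))$ descendants $w$. Each has length at most $X^\theta$, so $u$ contributes total length at most $O(X^\theta + X^{2\theta}/d(u))\le O(X^{2\theta})$.

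For case (b), $u$ ranges over the children of the relevant parents $v$, and Lemma~\ref{lem:branches}(1) allows $X^{3\theta+o(1)}$ children per parent. Long parents then contribute
\[
X^{1-\rho}\cdot X^{3\theta}\cdot X^{2\theta}=X^{19/39+1/4+o(1)}\le X^{9/10}.
\]
Short parents lie below $X^\rho$ and are themselves short rejected gaps, which sets up a recursion. Let $N(Z)$ denote the number of short rejected gaps with right endpoint at most $Z$. The case analysis gives
\[
N(X)\ll X^{o(1)}\bigl(X^{4/5}+X^{1-\rho+3\theta}+N(X^\rho)X^{3\theta}\bigr)X^{\theta},
\]
where the last factor $X^\theta$ bounds the equal-chain size.

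The main obstacle is to close this recursion without the $X^{o(1)}$ losses compounding. I would posit $N(Z)\le Z^{a+o(1)}$ and check that
\[
\rho a+4\theta\le a \iff a\ge \frac{4\theta}{1-\rho}=\frac{4/20}{19/39}\approx0.41.
\]
The other terms give exponents $4/5+\theta=0.85$ and $1-\rho+4\theta\approx0.69$, so $a=17/20$ is consistent. Since the scale contracts geometrically ($Z\mapsto Z^\rho$), the recursion depth is $O(\log\log X)$. Alternatively, I would note that the recursion is essentially finite after the first step, because $X^{\rho\cdot 17/20+4\theta}=X^{0.64}$ is already below the target, so a crude bound on $N(X^\rho)$ suffices. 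Then $N(X)\ll X^{17/20+o(1)}$, and multiplying by the maximal length $X^\theta$ gives total length $O(X^{9/10+o(1)})$. This must be done carefully so that the $Z^{o(1)}$ factors stay uniform over the dyadic scales.

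Finally, I would check that every short rejected gap is either raw or has a parent. This holds by the definitions, so the classification covers all of them.
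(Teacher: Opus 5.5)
Your argument is correct, and it takes a genuinely shorter route than the paper's.

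\textbf{How the two routes differ.} The paper traces each short gap all the way back to a raw root or a long ancestor. It compresses every equal string and sums over the number $t$ of unequal edges and over dyadic scale tuples $X,X^\rho,\ldots,X^{\rho^t}$. This requires the monotonicity in $t$ of both path exponents and the count $(C\log X)^{C\log\log X}=X^{o(1)}$ of scale tuples.

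You cut the path only at the last unequal edge, and this already suffices:
\begin{itemize}
 \item The raw case gives exponent $4/5+2\theta=9/10$, which is the paper's $t=0$ term.
 \item The long-parent case gives $1-\rho+5\theta=115/156$, which is the paper's $t=1$ long-path term.
 \item For a short parent, Lemma~\ref{lem:branches}(2) gives $x(v)=O(X^\rho)$. The trivial count of $O(X^\rho)$ prime gaps below that point then gives total length $X^{\rho+5\theta+o(1)}=X^{119/156+o(1)}$, which is below $X^{9/10}$.
\end{itemize}
So the multi-level iteration is unnecessary at these parameters. The $X^{o(1)}$ bookkeeping shrinks to a bounded number of polylogarithmic factors. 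The paper's full-path analysis would only pay off if one contraction step plus the trivial count were too weak, and here it is not.

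\textbf{Suggestions for the write-up.} Present the proof in this one-step form and drop the recursion. Your justification that a crude bound suffices, namely ``because $X^{\rho\cdot 17/20+4\theta}$ is below the target,'' mixes the inductive computation with the trivial one. The check that matters is $\rho+4\theta=139/195<17/20$, using $N(CX^\rho)\le CX^\rho$, which holds because distinct gaps have distinct prime right endpoints.

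If you do keep the recursion, it closes by ordinary induction. The recursive term carries a power saving, which absorbs the polylogarithmic losses, so you need no recursion-depth argument.

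A minor point: Lemma~\ref{lem:branches}(4), applied upward from the short gap $w$, shows that every gap on the final equal segment is short, including its top. The qualifier ``except possibly its top'' is therefore unnecessary, as your case (c) effectively concedes.
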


\begin{proof}
Trace a short vertex backwards in the chosen-parent forest until reaching either a raw short gap or a long parent.  Compress every maximal string of equal edges.  By Lemmas~\ref{lem:branches} and \ref{lem:equal-chain}, at child scale $Z$ an unequal step has at most $Z^{3\theta+o(1)}$ choices and the following equal string has at most $Z^{\theta+o(1)}$ endpoints.  Backwards across a short unequal edge the scale falls from $Z$ to $O(Z^\rho)$.

First consider paths starting at a raw short gap and having exactly $t\geq0$ unequal edges.  Reading backwards from scale $X$, the successive relevant scales are at most $X,X^\rho,\ldots,X^{\rho^t}$, up to $X^{o(1)}$ factors.  Lemma~\ref{lem:raw}, the branch bounds, and the initial equal string show that the number of terminal gaps on such paths is at most $X^{B_t+o(1)}$, where
\[
 B_t=\frac45\rho^t+4\theta\sum_{i=0}^{t-1}\rho^i+\theta\rho^t.
\]
After multiplying by the maximum terminal length $X^\theta$, the exponent is
\begin{equation}\label{eq:raw-path-exponent}
 B_t+\theta
 =\frac45\rho^t+\frac{4\theta(1-\rho^t)}{1-\rho}
   +\theta\rho^t+\theta.
\end{equation}
It decreases with $t$ because the coefficient of $\rho^t$ after collecting terms is $4/5+\theta-4\theta/(1-\rho)=167/380>0$; at $t=0$ it equals $4/5+2\theta=9/10$.

For paths stopped at a long parent, let $t\geq1$ include the first long-to-short edge, which is unequal by Lemma~\ref{lem:branches}(4).  If its first short child has scale at most $X^{\rho^{t-1}+o(1)}$, part (3) of that lemma gives at most $X^{(1-\rho)\rho^{t-1}+o(1)}$ possible long parents.  The analogue of \eqref{eq:raw-path-exponent}, already including terminal lengths, is
\begin{equation}\label{eq:long-path-exponent}
 (1-\rho)\rho^{t-1}
 +\frac{4\theta(1-\rho^t)}{1-\rho}+\theta.
\end{equation}
This too decreases with $t$, since the coefficient of $\rho^{t-1}$ after collecting terms is $(1-\rho)-4\theta\rho/(1-\rho)=205/741>0$; at $t=1$ it is $1-\rho+5\theta=115/156<9/10$.

Repeated scale contraction gives $t=O(\log\log X)$.  There are only $O(\log X)$ choices for each intermediate dyadic scale.  Hence the total number of scale tuples is at most
\[
 (C\log X)^{C\log\log X}
 =\exp\bigl(O((\log\log X)^2)\bigr)=X^{o(1)},
\]
and the additional summation over $t$ has the same harmless size.  Equations \eqref{eq:raw-path-exponent} and \eqref{eq:long-path-exponent} prove the proposition.
\end{proof}

\begin{proof}[Proof of Theorem~\ref{thm:main}]
Every prime gap meeting $[1,X]$ has left endpoint at most $X$ and, by Bertrand's postulate, right endpoint at most $2X$.  Proposition~\ref{prop:forest}, applied at $2X$, therefore bounds the total length of the short rejected gaps meeting $[1,X]$ by $O(X^{9/10+o(1)})=o(X)$.  Lemma~\ref{lem:long-tail} gives the same conclusion for the long gaps.  Since the complement of $A$ consists, apart from $1$, precisely of the interiors of rejected prime gaps, we have $|[1,X]\setminus A|=o(X)$.  Thus $A$ has density one, and Lemma~\ref{lem:stability} gives the required injectivity of all consecutive-block products.
\end{proof}

\end{document}